\documentclass[a4paper, 12pt]{article}

\usepackage[small]{titlesec}
\usepackage{tikz-cd}
\usepackage{booktabs}
\usepackage[T1]{fontenc}
\usepackage[utf8]{inputenc}
\usepackage[english]{babel}
\usepackage[a4paper,top=4cm,bottom=3cm,left=2.5cm,right=2.5cm]{geometry}
\usepackage{amsfonts}
\usepackage{mathtools}
\usepackage{amsmath}

\usepackage{lmodern}
\usepackage{empheq}
\usepackage{amsthm}
\usepackage{amssymb}
\usepackage{booktabs}
\usepackage{caption}
\usepackage{siunitx}
\usepackage{float}
\usepackage{amssymb}
\usepackage{graphicx}
\usepackage{stmaryrd}
\usepackage{color}
\usepackage{pifont}%
\usepackage{enumerate}
\usepackage{enumitem}
\setlist[description]{leftmargin=\parindent,labelindent=\parindent}
\usepackage{listings} 
\usepackage{amsmath}
\usepackage{amscd}
\usepackage{braket}
\usepackage{physics}
\usepackage{tikz}
\usepackage{calc}
\usepackage[hidelinks]{hyperref}

\newcommand{\dom}{\mathrm{dom}}

\newcommand{\R}{\mathbb{R}}
\newcommand{\sph}{\mathbb{S}}

\newtheorem{theorem}{Theorem}[section]


\newcommand{\supp}{\mathrm{supp}\ }

\newcommand\blfootnote[1]{%
	\begingroup
	\renewcommand\thefootnote{}\footnote{#1}%
	\addtocounter{footnote}{-1}%
	\endgroup
}
\title{Entire Monge-Ampère equations and weighted Minkowski problems }
\author{Jacopo Ulivelli}
\date{}

\begin{document}
	\maketitle
	
	\begin{abstract}
		In this short note, we prove the existence of solutions to a Monge-Ampère equation of entire type derived by a weighted version of the classical Minkowski problem.
		\blfootnote{
			MSC 2020 Classification: 52A20 ,35J96, 26B25.\\
			Keywords: Convex function, Monge-Ampère equation, Minkowski problem.}
	\end{abstract}
	
	\section{Introduction}
	Consider a Monge-Ampère equation of the form
	\begin{equation}\label{MAequation}
		c_u\phi(Du(x),u^*(Du(x)))\det D^2 u(x)=f(x), \quad x \in \R^n.
	\end{equation}
	Here, $c_u>0$ is a constant depending on the solution, $u^*$ is the \textit{Fenchel-Legendre transform} of $u$ \[ u^*(x)=\sup_{y \in \R^n}\{ x\cdot y-u(y)\},\] $f \in L^1(\R^n)$ is non-negative, and $\phi:\R^{n+1}\to \R$ is a non-negative, continuous, and even function. With more generality, we can work with a Borel measure $\rho$ on the right-hand side of \eqref{MAequation}. To do so, we focus on weak solutions of \eqref{MAequation}. That is, convex functions $u$ such that 
	\begin{equation}\label{eq:weak_r_curv}
		\rho(B)=\int_{\partial u(B)}c_u\phi((x,u^*(x)))\,dx,
	\end{equation}
	where $\partial u$ is the \textit{subgradient} of $u$. The reader can find details about the theory of convex functions in \cite{Rockafellar1970}. Clearly, \eqref{MAequation} is recovered when $\rho$ has continuous density $f$ with respect to the Lebesgue measure. Following the notation of Bakelman \cite{Bakelman}, we denote the right-hand side of \eqref{eq:weak_r_curv} as $\omega(B,u,c_u\phi)$. By the same procedure as in \cite[Section 9.6]{Bakelman} it is easy to check that this is a Borel measure. There, \eqref{MAequation} was studied when $\phi$ depends only on $Du$. Other instances of the problem have been studied by Chow and Wang \cite{Wang_Minkowski, EntireSolutions} and Bielawski \cite{Bielawski}.
	
	For our strategy we make use of a recent result from Kryvonos and Langharst \cite{DylLyu} (see Theorem \ref{thm:general_Minkowski} below) on the existence of convex compact sets with prescribed weighted surface-area measure. The method we propose allows a quick translation of results on convex bodies to results on convex functions, following a procedure similar to the one employed by Knoerr and the author in \cite{FromConvToFunc2023}. Moreover, the specific structure of \eqref{MAequation} allows weaker assumptions than the ones usually required in Monge-Ampère equations depending on the solution and its gradient (see, for example, \cite[Section 18]{Bakelman}).
	
	Let $\mu$ be a Borel measure on $\R^{n+1}$ with continuous density $\phi$ such that 
	\begin{equation}\label{eq:cond_weigh_mink}
		\lim_{r\to \infty} \frac{\mu(rB^2_{n+1})^{\frac{\beta}{n+1}}}{r}=0 \text{ and } \lim_{r\to 0^+} \frac{\mu(rB^2_{n+1})^{\frac{\beta}{n+1}}}{r}=+\infty
	\end{equation}
	for some $\beta>0$, where $B^2_{n+1}$ is the Euclidean unit ball in $\R^{n+1}$. These hypotheses are precisely those prescribed by Kryvonos and Langharst in \cite{DylLyu}. Our main result reads as follows.
	\begin{theorem}\label{thm:main}
		Consider a Borel measure $\rho$ on $\R^n$ that is not concentrated on an affine hyperplane. Consider, moreover, a continuous even function $\phi:\R^{n+1}\to [0,\infty)$. Then, if $\rho$ has finite first moment, i.e. \[ \int_{\R^n}|x|\,d\rho(x)<+\infty,\] and the measure $\mu$ with density $\phi$ with respect to the Lebesgue measure satisfies \eqref{eq:cond_weigh_mink}, there exist a constant $c_u>0$ and a convex function $u$ such that
		\begin{equation}\label{eq:weak_Ma}
			\omega(B, u,c_u\phi)=\rho(B)
		\end{equation}
		for every Borel set $B \subset \R^n$.
	\end{theorem}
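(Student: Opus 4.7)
The strategy is to reduce \eqref{eq:weak_Ma} to the weighted Minkowski problem for convex bodies in $\R^{n+1}$ solved by Theorem~\ref{thm:general_Minkowski}, following the procedure of \cite{FromConvToFunc2023}. To a convex function $u: \R^n \to \R$ we associate the closed convex set $E_{u^*} := \{(y,s) \in \R^{n+1} : s \geq u^*(y)\}$, and identify the open lower hemisphere $\mathbb{S}^n_- \subset \mathbb{S}^n$ with $\R^n$ via $T(x) := (x, -1)/\sqrt{1+|x|^2}$. Where $u$ is smooth and strictly convex, the outer unit normal to $E_{u^*}$ at $(y, u^*(y))$ is $T(Du^*(y))$, and a direct computation (using $d\mathcal{H}^n = \sqrt{1+|Du^*(y)|^2}\,dy = \sqrt{1+|x|^2}\,dy$ on the graph of $u^*$, followed by the Legendre substitution $y = Du(x)$) yields, for every Borel $B \subset \R^n$,
\[
S_\mu(E_{u^*}, T(B)) \;=\; \int_B \sqrt{1+|x|^2}\; \phi\bigl(Du(x), u^*(Du(x))\bigr)\,\det D^2 u(x)\, dx.
\]
The right-hand side is the value on $B$ of the measure $\omega(\cdot, u, \phi)$ reweighted by $\sqrt{1+|x|^2}$; thus the Monge--Amp\`ere-type measure attached to $u$ and $\phi$ is recovered, up to this positive factor, from the $\mu$-weighted surface area measure of $E_{u^*}$ restricted to $\mathbb{S}^n_-$.

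This identity dictates the construction. For each $R > 0$, set $\rho_R := \rho|_{B_R(0)}$ and define
\[
\bar\rho_R \;:=\; T_*\bigl(\sqrt{1+|x|^2}\,\rho_R\bigr) \;+\; (-\mathrm{id})_*\,T_*\bigl(\sqrt{1+|x|^2}\,\rho_R\bigr),
\]
an even finite Borel measure on $\mathbb{S}^n$. Finiteness follows from $\sqrt{1+|x|^2}\leq 1+|x|$ combined with the finite first moment of $\rho$; since $\rho$ is not concentrated on any affine hyperplane, a direct check of preimages under $T$ shows that $\bar\rho_R$ is not concentrated on any great subsphere. Theorem~\ref{thm:general_Minkowski} then produces an origin-symmetric convex body $K_R \subset \R^{n+1}$ and a constant $c_R > 0$ with $c_R\, S_\mu(K_R, \cdot) = \bar\rho_R$. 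Because $\bar\rho_R$ is supported in two spherical caps strictly away from the equator, the lower boundary of $K_R$ is globally the graph of a convex function, which (by Legendre duality) has the form $u_R^*$ for some convex function $u_R$ whose gradient range contains $B_R$. Substituting the identity of the previous paragraph applied to $u_R$ into the Kryvonos--Langharst equality restricted to $T(B)$, $B \subset B_R$, the common factor $\sqrt{1+|x|^2}$ cancels on both sides to produce
\[
\omega(B, u_R, c_R \phi) \;=\; \rho(B) \quad \text{for every Borel } B \subset B_R.
\]

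It remains to pass to the limit $R \to \infty$. The non-degeneracy of $\rho$ and the growth condition \eqref{eq:cond_weigh_mink} must be leveraged to obtain uniform a priori estimates on $(K_R, c_R)$ that prevent the bodies from degenerating or escaping in any direction, while keeping $c_R$ in a compact subset of $(0, \infty)$. Extracting a subsequence, the functions $u_R$ then converge locally uniformly on $\R^n$ to a convex function $u$ (equivalently, $K_R \to E_{u^*}$ in a suitable Painlev\'e--Kuratowski sense) with $c_R \to c_u > 0$. Weak continuity of the measure $\omega(\cdot, u, c \phi)$ under this mode of convergence then extends the identity above from $B \subset B_R$ to all Borel $B \subset \R^n$, yielding \eqref{eq:weak_Ma}. \textbf{The main obstacle} is precisely this compactness-and-convergence step: producing the uniform estimates on $(K_R, c_R)$ and verifying the weak continuity of the subgradient/surface-area measure. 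The hypotheses of the theorem---finite first moment of $\rho$, non-concentration on a hyperplane, and the two-sided growth \eqref{eq:cond_weigh_mink} of $\mu$---are calibrated precisely to deliver these estimates.
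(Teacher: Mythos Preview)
Your overall architecture---lift $\rho$ to the lower hemisphere via the gnomonic map $T(x)=(x,-1)/\sqrt{1+|x|^2}$, multiply by the Jacobian factor $\sqrt{1+|x|^2}$, symmetrize, apply Theorem~\ref{thm:general_Minkowski}, and read off the convex function from the resulting body---is exactly the paper's approach. The change-of-variables identity you record in the first paragraph is also the one the paper uses.

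The gap is the truncation. You restrict to $\rho_R=\rho|_{B_R}$, obtain bodies $K_R$, and then face a compactness-and-convergence step that you correctly flag as the ``main obstacle'' and do not carry out. This step is entirely unnecessary. The finite first moment of $\rho$ already makes the \emph{full} lifted measure finite (you observe this yourself), and once symmetrized it is an even finite Borel measure on $\sph^n$ not concentrated on a great subsphere. Theorem~\ref{thm:general_Minkowski} therefore applies directly to the untruncated $\bar\rho$ and returns a \emph{compact} centrally symmetric body $K$ together with the constant $c_{\mu,K}$ in one shot. No approximation, no a priori estimates, no limit.

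Your stated reason for truncating---to keep the support of $\bar\rho_R$ away from the equator so that the lower boundary of $K_R$ is globally a graph---is a non-issue once you use the compact body. With $K$ compact, the paper simply sets $w(x)\coloneqq\inf\{t:x+tv\in K\}$ on the compact projection $\pi(K)$ and takes $u=w^*$, which is finite, Lipschitz and coercive on all of $\R^n$. The identity $\omega(B,u,c_u\phi)=\rho(B)$ then follows from the single change of variables you already wrote down (with $c_u=c_{\mu,K}$), and the proof is complete. In short: drop the truncation, apply Theorem~\ref{thm:general_Minkowski} once, and your first two paragraphs become the whole proof.
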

	We remark that the constant $c_u$ is not necessarily unique, nor are the solutions of \eqref{eq:weak_Ma}. For a further discussion see Section \ref{remarks}. The role of such constant is due to the high non-homogeneity of the problem and is clarified later with Theorem \ref{thm:general_Minkowski}. Notice, moreover, that $\phi$ is required to be even as a function on $\R^{n+1}$, which is weaker than asking for symmetry on the first $n$ components. In particular, the solutions found in Theorem \ref{thm:main} are not necessarily symmetric.
	
	Furthermore, we provide the following expected regularity result.
	\begin{theorem}\label{thm:regularity}
		In the hypotheses of Theorem \ref{thm:main}, suppose moreover that $\rho$ has continuous density $f$ with respect to the Lebesgue measure. If $f$ and $\phi$ are such that there exists $c>0$ such that $f,\phi>c$ and of class $C^{k,\alpha}$ for some $k \geq 0$ and $\alpha>0$, then any convex weak solution of \eqref{MAequation} is of class $C^{k+2,\alpha}$.
	\end{theorem}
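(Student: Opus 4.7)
My plan is to reduce the non-standard equation \eqref{MAequation} to a genuine Monge-Ampère equation of the form $\det D^2 u(x) = F(x, u(x), Du(x))$ and then apply Caffarelli's regularity theory via a bootstrap argument. The crucial observation is that, at every point of differentiability of $u$, the Fenchel identity gives
\[
u^*(Du(x)) = x\cdot Du(x) - u(x).
\]
Substituting this into \eqref{MAequation} eliminates the Legendre transform and reformulates the equation as $\det D^2 u(x) = F(x, u(x), Du(x))$ with
\[
F(x,z,p) := \frac{f(x)}{c_u\,\phi(p,\, x\cdot p - z)}.
\]
Under the standing hypotheses $f,\phi\in C^{k,\alpha}$ and $f,\phi>c$, the function $F$ is itself $C^{k,\alpha}$ jointly in $(x,z,p)$ on compact sets, and the local Lipschitz character of any convex $u$ forces the composite $F(x,u(x),Du(x))$ to be pinched between two positive constants on every compact set.

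The first step is to establish strict convexity together with a baseline $C^{1,\alpha'}$ regularity for some $\alpha'>0$. Since $\phi(y,u^*(y))$ is bounded above and below by positive constants on the bounded image $\partial u(K)$ for any compact $K\subset\R^n$, the weak identity \eqref{eq:weak_r_curv} traps the Monge-Ampère measure of $u$ between two positive multiples of Lebesgue measure on $K$. Caffarelli's interior strict convexity theorem together with his $C^{1,\alpha'}$ regularity theorem then yield that $u$ is locally strictly convex and belongs to $C^{1,\alpha'}_{\mathrm{loc}}$.

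Next comes the bootstrap. With $u\in C^{1,\alpha'}_{\mathrm{loc}}$, the composite $F(\cdot,u,Du)$ is locally $C^{0,\alpha'}$, and Caffarelli's $C^{2,\alpha}$ estimate upgrades $u$ to $C^{2,\alpha'}_{\mathrm{loc}}$. At this stage both $u$ and $Du$ are locally Lipschitz, so $F(\cdot,u,Du)$ inherits the full Hölder exponent $\alpha$ of the outer function $F\in C^{k,\alpha}$ and belongs to $C^{0,\alpha}_{\mathrm{loc}}$; a second application of the $C^{2,\alpha}$ estimate then returns $u\in C^{2,\alpha}_{\mathrm{loc}}$. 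Iterating: once $u\in C^{j,\alpha}_{\mathrm{loc}}$ with $2\le j\le k+1$, the right-hand side belongs to $C^{j-1,\alpha}_{\mathrm{loc}}$ by composition with $\phi,f\in C^{k,\alpha}$, and higher-order Schauder theory for Monge-Ampère (applied via the cofactor linearization) promotes $u$ to $C^{j+1,\alpha}_{\mathrm{loc}}$. The iteration saturates at the announced $u\in C^{k+2,\alpha}_{\mathrm{loc}}$.

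The main subtlety is the dependence of \eqref{MAequation} on the Legendre transform $u^*$, whose regularity would normally have to be tracked as a separate intermediate step. The Fenchel identity disposes of this issue at a stroke, reducing $u^*\circ Du$ to the elementary polynomial expression $x\cdot Du - u$; thereafter, the positivity bound $f,\phi>c$ and the Hölder regularity of the data feed directly into the classical Caffarelli–Schauder bootstrap for Monge-Ampère equations without further obstruction.
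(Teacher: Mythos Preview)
Your argument follows essentially the same route as the paper: bound the Monge--Amp\`ere measure of $u$ locally between positive multiples of Lebesgue measure, invoke Caffarelli's strict convexity and $C^{1,\alpha'}$ results, rewrite the equation as $\det D^2 u = g$ with H\"older right-hand side, apply Caffarelli's $C^{2,\alpha}$ estimate, and bootstrap. Your use of the Fenchel identity $u^*(Du(x)) = x\cdot Du(x) - u(x)$ to eliminate $u^*$ up front is a clean simplification; the paper instead tracks the H\"older regularity of $u^*$ separately and only records the Fenchel rewriting in its final remarks.

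One point the paper handles more explicitly than you do: to apply Caffarelli's strict convexity theorem one needs a bounded convex domain on which $u$ takes affine boundary values, and the paper obtains this by working on the sublevel sets $\Omega_t = \{u<t\}$, which are compact because the solution is coercive (a property established for the constructed solution in the proof of Theorem~\ref{thm:main}). Your phrase ``on any compact $K$'' is not quite the right setup, since Caffarelli's localization result requires the flat set to reach the boundary of the domain, and an arbitrary compact $K$ carries no useful boundary information; you should pass through the sublevel sets as the paper does.
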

	\section{Proofs}
	
	In the following we consider the $(n+1)$-dimensional Euclidean space $\R^{n+1}$ with the Euclidean norm $|\cdot|$ and the usual scalar product $x \cdot y$ for $x,y \in \R^{n+1}$. Let $K$ be a compact convex subset of $\R^{n+1}$. For background material on convex geometry, see Schneider's monograph \cite{schneider_2013}. For every $\xi$ in the unit sphere $\sph^{n}$ of $\R^{n+1}$ consider the set $\tau_K(\xi)$ of all the points on the boundary of $K$, denoted $\partial K$, such that there is a tangent hyperplane at $x$ with outer normal $\xi$ for every $x \in \tau_K(\xi)$. The map $\tau_K$ is known as \textit{reverse spherical image}. If $\mu$ is a Borel measure on $\R^{n+1}$ with continuous density $\phi$ with respect to the Lebesgue measure, we define on $\sph^{n+1}$ the $\mu$-surface-area measure of $K$ by 
	\begin{equation}\label{eq:weighted_measures}
		S_K^\mu(B)\coloneq\int_{\tau_K(B)}\phi(y)\, d \mathcal{H}^n(y)
	\end{equation}
	for every Borel set $B \subset \sph^n$, where $\mathcal{H}^n$ is the $n$-dimensional Hausdorff measure restricted to $\partial K$. When $\mu$ is the Lebesgue measure, the apex $\mu$ is omitted and $S_K$ is the classical surface-area measure (see \cite[Section 4]{schneider_2013}). 
	
	Existence theorems for compact convex sets with given $\mu$-surface-area measure have been studied, for example, by Livschytz \cite{LivMi} and Kryvonos and Langharst \cite{DylLyu}. The particular case where $\mu$ is the Gaussian measure has been covered in depth by Huang, Xi, and Zhao \cite{Zhao_Gauss}. Recovering a convex set by its surface-area measure and generalizations of this procedure are known as Minkowski problems. We recommend, for example, \cite[Section 8]{schneider_2013} for an introduction.
	
	Our main instrument is the following result.\vspace{-0.9em}
	\begin{theorem}{\cite[Theorem 1.2]{DylLyu}}\label{thm:general_Minkowski}
		Let $\mu$ be an even Borel measure on $\R^{n+1}$ satisfying \eqref{eq:cond_weigh_mink}.
		Suppose $\rho$ is a finite, even Borel measure on $\sph^n$ that is not concentrated in any great subsphere. Then, there exists a centrally symmetric convex compact set $K \subset \R^{n+1}$ such that
		\[ d\rho(\xi)=c_{\mu,K}dS^\mu_K(\xi), \quad c_{\mu,K}\coloneq \mu(K)^{\frac{\beta}{n+1}-1}.\]
	\end{theorem}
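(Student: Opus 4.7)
The plan is to prove the theorem by the direct method of the calculus of variations on the space $\K_e^{n+1}$ of centrally symmetric convex bodies in $\R^{n+1}$. I would maximize the functional
\[
F(K) \;=\; \tfrac{n+1}{\beta}\, \mu(K)^{\beta/(n+1)} \;-\; \int_{\sph^n} h_K(\xi)\, d\rho(\xi),
\]
engineered so that its Euler--Lagrange equation coincides with the desired Minkowski identity. The essential variational input is the weighted first-variation formula
\[
\tfrac{d}{dt}\Big|_{t=0^+}\mu(K + tL) \;=\; \int_{\sph^n} h_L\, dS_K^\mu,
\]
which follows from continuous density of $\mu$ by standard shell estimates near $\partial K$. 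Granting the existence of a maximizer $K^*$ with $\mu(K^*) > 0$, by combining this formula with the chain rule and by testing against Wulff-shape perturbations $h_{K^*}+tu$ for even $u \in C(\sph^n)$, one obtains
\[
\int_{\sph^n} u\, d\rho \;=\; \mu(K^*)^{\beta/(n+1)-1}\int_{\sph^n} u\, dS_{K^*}^\mu.
\]
Since $\rho$ and $S_{K^*}^\mu$ are even Borel measures (the latter by central symmetry of $K^*$ and evenness of $\mu$), and since even continuous functions on $\sph^n$ separate even Borel measures, this yields $d\rho = c_{\mu,K^*}\, dS_{K^*}^\mu$ with $c_{\mu,K^*} = \mu(K^*)^{\beta/(n+1)-1}$.

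The substantive work is to establish existence of a maximizer, and this is where the coercivity conditions \eqref{eq:cond_weigh_mink} and the non-concentration of $\rho$ enter. The second condition in \eqref{eq:cond_weigh_mink} forces $F(rB^2_{n+1}) > 0$ for sufficiently small $r > 0$, hence $\sup F > 0$. For the upper bound on $F$, the non-concentration hypothesis combined with compactness of $\sph^n$ produces $c_0 > 0$ such that $\int_{\sph^n} |\xi \cdot \eta|\, d\rho(\eta) \geq c_0$ for every $\xi \in \sph^n$ (by continuity of the integrand in $\xi$ and positivity away from every great subsphere). Applied at $\xi = x_K/|x_K|$ for $x_K \in K$ realizing the circumradius $R(K)$, central symmetry of $K$ gives $\int h_K\, d\rho \geq c_0 R(K)$. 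Combining this with $\mu(K) \leq \mu(R(K) B^2_{n+1})$ and the first condition in \eqref{eq:cond_weigh_mink} forces $F(K) \to -\infty$ as $R(K) \to \infty$, so $F$ is bounded above and maximizing sequences have bounded circumradius.

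The final step is compactness. Along a maximizing sequence $(K_j) \subset \K_e^{n+1}$, the elementary inequality $F(K_j) \leq \tfrac{n+1}{\beta}\mu(K_j)^{\beta/(n+1)}$ together with $\sup F > 0$ gives $\mu(K_j) \geq \delta > 0$; continuity of the density of $\mu$ then prevents collapse of $K_j$ to a lower-dimensional set. Blaschke's selection theorem extracts a Hausdorff convergent subsequence with limit $K^* \in \K_e^{n+1}$ satisfying $\mu(K^*) > 0$, and continuity of $F$ in the Hausdorff topology shows $K^*$ is a maximizer. The main obstacle is the coercivity step, specifically the uniform lower bound $\int h_K\, d\rho \geq c_0 R(K)$: it is the single place where non-concentration of $\rho$ is essentially used, and its interplay with the sublinear growth of $\mu(rB^2_{n+1})^{\beta/(n+1)}$ in $r$ is precisely what closes the direct method.
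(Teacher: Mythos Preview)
The paper does not prove this theorem: it is quoted verbatim as \cite[Theorem~1.2]{DylLyu} and used as a black box in the proof of Theorem~\ref{thm:main}. There is therefore no proof in the paper to compare your proposal against.

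That said, your variational strategy is precisely the one carried out in the cited reference of Kryvonos and Langharst (and earlier, in the Gaussian case, by Huang, Xi, and Zhao \cite{Zhao_Gauss}). The functional you wrote down, the use of the two growth conditions in \eqref{eq:cond_weigh_mink} to guarantee $\sup F>0$ and coercivity respectively, the non-concentration hypothesis to produce the linear lower bound $\int h_K\,d\rho\ge c_0 R(K)$, and Blaschke selection for compactness are all exactly as in those sources. Two points would need to be tightened in a complete write-up: the first-variation identity for Wulff-shape perturbations $[h_{K^\ast}+tu]$ requires the Aleksandrov variational lemma rather than the Minkowski-sum formula you stated (since $h_{K^\ast}+tu$ need not be a support function), and the step ``$\mu(K_j)\ge\delta$ prevents collapse'' uses that $\phi$ is locally bounded so that $\mu(K_j)\le \bigl(\sup_{R B^2_{n+1}}\phi\bigr)\,\mathrm{vol}(K_j)$ on a common ball. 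Both are routine.
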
\noindent
	The constant $c_{\mu,K}$ appearing in the statement of Theorem \ref{thm:general_Minkowski} plays the same role as $c_u$, the one appearing in Theorem \ref{thm:main}. Terms of this kind are often involved in non-homogeneous Minkowski problems. However, if the measure $\mu$ is homogeneous of some positive degree, the constant can be chosen equal to one. See \cite{LivMi} and the discussion in \cite{DylLyu} after Theorem 1.2 for more details on this topic.\medskip
	
	We are now ready to prove our main result.\vspace{-0.9em} 
	\begin{proof}[Proof of Theorem \ref{thm:main}] For a fixed $v \in \R^{n+1}$, we consider $\R^n \subset \R^{n+1}$ as the hyperplane orthogonal to $v$, which will be the domain of \eqref{MAequation}. Once we have a measure $\rho$ on $\R^n$, we can lift it to $\sph^n_{-}\coloneq \{\xi \in \sph^n: \xi \cdot v<0 \}$ through the diffeomorphism 
		\begin{align*}
			L\colon \R^n &\to \sph^n_{-}\\
			x &\mapsto \frac{(x,-1)}{\sqrt{1+|x|^2}}.
		\end{align*}
		Notice that $L$ brings affine hyperplanes to the intersection of a great subsphere with $\sph^n_{-}$.
		
		Consider a finite Borel measure $\rho$ on $\R^n$ with finite first moment, and define the measure \[ \rho'(B)=\int_{B}\sqrt{1+|x|^2}\,d\rho(x)\] for every Borel set $B \subset \R^n$. Notice that $\supp(\rho)=\supp(\rho')$. Now, we lift $\rho'$ through $L$ as a measure on $\sph^n_{-}$ considering its push-forward $(L)_{\sharp}\rho'$ which, for every Borel set $\omega \subset \sph^n_{-}$, is defined as \[(L)_{\sharp}\rho'(\omega)=\rho'(L^{-1}(\omega)).\] Then, we extend it trivially on the rest of $\sph^n$ and consider the symmetrized measure $\bar{\rho}=(L)_{\sharp}\rho'+(L)_{\sharp}\rho'\circ R$, where $R(\xi)=-\xi$. It is easy to check that it is not concentrated on a great subsphere and is symmetric. By Theorem \ref{thm:general_Minkowski} there exists a centrally symmetric convex compact set $K\subset \R^{n+1}$ and a positive constant $c_{\mu,K}$ such that $\bar{\rho}=c_{\mu,K}S_K^\mu$. This body is the starting point to construct our solution.
		
		The rest of the proof will follow the diagram below.
		\begin{equation*}\label{diagram}
			\begin{tikzcd} \partial {K}_{-}  \arrow[swap]{d}{\pi} & \arrow{l}{\tau_K} \mathbb{S}^n_{-} \\ \dom(w) \arrow{r}{\partial w} & \R^n \ar{u}{L} \end{tikzcd} 
		\end{equation*}
		Here, $\pi: \R^{n+1}\to \R^n$ is the orthogonal projection along $v$, $\partial K_{-}$ is the closure of $\tau_K(\sph^n_{-})$, and $w$ is the convex function on $\R^n$ defined as $w(x)\coloneq \inf\{t: x+tv \in K\}$. We claim that $u=w^*$ satisfies \eqref{eq:weak_Ma}. Notice that since $\dom (w)\coloneq \{p \in \R^n \colon w(p)<+\infty \}$ is compact, $u(x)<+\infty$ for every $x \in \R^n$ and thus $\partial u(x) \neq \emptyset$ for every $x \in \R^n$. Since $\partial u(x) \in \dom(w)$ for every $x \in \R^n$, we have that $u$ is Lipschitz. Moreover, notice that $K$ is centrally symmetric with non-empty interior. In particular, $\dom(w)$ contains an open neighborhood of the origin, and thus $u$ is coercive. 
		
		First, consider $x \in \R^n$. By definition, $\partial u(x)=\{p \in \R^n\colon x \in \partial w(p)\}$. Then, by construction $x \in \partial w(p)$ if and only if \[L(x)=\frac{(x,-1)}{\sqrt{1+|x|^2}} \in \tau_K^{-1}\circ \pi^{-1}(p).\] In particular, even if $\partial u(x)$ is not a singleton, it is brought back to a singleton by $\tau_K^{-1}$ (which is just the Gauss map) and thus, for every $x \in \R^n$ \[x=L^{-1}\circ \tau_K^{-1} \circ \pi^{-1}\circ \partial u(x).\]
		
		To conclude, we now check that $u$ is a weak solution of \eqref{eq:weak_Ma}, where $c_u=c_{\mu,K}$. Consider a Borel set $B \subset \R^n$. We have the changes of variables
		\begin{align*}
			\omega(B,u,c_u\phi)&=\int_{\partial u(B)}c_u\phi((x,w(x)))\, dx=\int_{\pi^{-1}\circ \partial u(B)} c_u\frac{\phi(y)}{\sqrt{1+|Dw(\pi(y))|^2}}\, d\mathcal{H}^n(y) \\ &=\int_{\tau_K^{-1}\circ \pi^{-1}\circ \partial u(B)}|\xi \cdot v| c_{\mu,K }\, dS_K^\mu(\xi)=\int_{L^{-1}\circ \tau_K^{-1}\circ \pi^{-1}\circ \partial u(B)} \frac{1}{\sqrt{1+|z|^2}} \, d\rho'(z)\\ &=\int_{L^{-1}\circ \tau_K^{-1}\circ \pi^{-1}\circ \partial u(B)} \, d\rho(z)=\rho(B),
		\end{align*}
		concluding the proof. Here, $1/\sqrt{1+|Dw(\pi(y))|^2}$ is the approximate Jacobian of $\pi$ and $Dw$ is defined almost everywhere on $\dom(w)$ since $w$ is convex. Moreover, we used the fact that the unit normal vector at $(x,w(x))$ is \[\frac{(D w(x),-1)}{\sqrt{1+|D w(x)|^2}}\] whenever $D w$ is defined, and thus, for $z \in \R^n$ and $x \in \partial u(z)$, \[\frac{1}{\sqrt{1+|D w(\pi((x,w(x))))|^2}}=|L(z)\cdot v|=\frac{1}{\sqrt{1+|z|^2}}.\]
	\end{proof}
	
	Suppose now that $\rho$ has continuous density $f$, and that there exists $c>0$ such that $c<f,\phi$. The proof of Theorem \ref{thm:regularity} can be considered classic and follows, for example, the same steps of \cite[Theorem 0.2]{Bielawski}. We include it for the convenience of the reader. For a lighter notation, since we suppose that a convex solution exists, the constant $c_u$ is absorbed in $\phi$.
	\begin{proof}[Proof of Theorem \ref{thm:regularity}]
		As remarked in the previous proof, $u$ is finite and coercive. Therefore, the level sets $\Omega_t\coloneq\{x \in \R^n \colon u(x)<t\}$ are compact for every $t\in \R$. Moreover since $u$ is Lipschitz and $f,\phi$ are strictly positive and continuous, by \eqref{eq:weak_Ma} there exist $0<c_1<c_2$ such that 
		\begin{equation}\label{eq:Ma_ineq}
			c_1 |B| \leq |\partial u(B)|\leq c_2 |B|
		\end{equation}
		for every Borel set $B \subset \Omega_t$ once $t$ is fixed.
		
		Now, thanks to \cite[Corollary 2]{Caffa_adv}, $u$ is strictly convex in $\Omega_t$, and since $t$ is arbitrary, $u$ is strictly convex everywhere. In \cite{Caffa_comm} it was proved that if $u$ satisfies \eqref{eq:Ma_ineq} and is strictly convex, then $u \in C^{1,\beta}(\Omega_t)$, where $\beta$ depends on $t$. Suppose now that $\phi,f$ are of class $C^{0,\alpha}$. First, rewrite \eqref{MAequation} as \[\det D^2u=\frac{f}{\phi((Du,u^*(Du)))}=g,\] where $g$ is locally of class $C^{0,\gamma}$ for some $\gamma>0$ since both $Du$ and $u^*$ are H\"older continuous. By \cite[Theorem 1.2]{Caff_ann} applied to $u-t$ on $\Omega_t$, we infer $u \in C^{2,\gamma}(\Omega_t)$, and therefore $u\in C^2(\R^n)$. Thus, $g$ is $C^{0,\alpha}$ everywhere, and a reproduction of the argument above proves that $u$ is $C^{2,\alpha}(\R^n)$, settling the case $k=0$. The extension to $k\geq 1$ is achieved through a classic induction argument.
	\end{proof}
	
	\section{Final Remarks}\label{remarks}
	
	The proof of Theorem \ref{thm:regularity} shows that if $\rho$ has continuous density $f$ and both $f$ and $\phi$ are strictly positive, then a solution of \eqref{MAequation} is differentiable, and thus it can be rewritten as \[\phi(Du(x),x\cdot Du(x)-u(x))\det D^2 u(x)=f(x), \] weakening considerably the assumptions that usually are required for the existence of solutions to problems depending explicitly on $u$ (see, for example, \cite[(18.4),(18.5)]{Bakelman}). In particular, no regularity or monotonicity is required for $\phi$.
	
	Notice, moreover, that Theorem \ref{thm:regularity} provides a regularity result for Theorem \ref{thm:general_Minkowski} in case the measure on the sphere and the ambient measure have H\"older-continuous densities.
	
	Concerning the non-uniqueness of the solution and the constant $c_u$, we now adapt some remarks by Huang, Xi, and Zhao \cite{Zhao_Gauss}, who studied the instance of the Gaussian measure $\gamma$, that is, $\phi((x_1,\dots,x_{n+1}))=e^{-\sum_{i=1}^{n+1}x_i^{2}}/\sqrt{2\pi}^{n+1}$. A first source of non-uniqueness concerns the exponent $\beta$ in the condition \eqref{eq:cond_weigh_mink}. 
	\begin{theorem}{\cite[Theorem 1.2]{Zhao_Gauss}}\label{thm:Zhao_Gauss}
		Suppose $\rho$ is a finite even Borel measure on $\sph^{n}$ not concentrated in any closed
		hemisphere. Then for each $0<\beta<\frac{1}{n+1}$, there exists a centrally symmetric convex compact set $K \subset \R^{n+1}$
		that
		\[d\rho(\xi)=c_{\gamma,K}dS_{K}^{\gamma}(\xi)\]	
		where 
		\[c_{\gamma,K}=\gamma(K)^{\frac{\beta}{n+1}-1}.\]
	\end{theorem}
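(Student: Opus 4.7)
The plan is to mirror the classical variational approach to Minkowski-type problems, adapted to the Gaussian weight. The idea is to extract $K$ from the Euler–Lagrange equation of a constrained minimization and then tune the constraint level so that the Lagrange multiplier takes the prescribed form $\gamma(K)^{\beta/(n+1)-1}$.

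For a parameter $a \in (0,1)$ to be chosen, consider the problem of minimizing the functional
\[
\Phi(K) = \int_{\sph^n} h_K(\xi)\, d\rho(\xi)
\]
over origin-symmetric convex bodies $K \subset \R^{n+1}$ subject to $\gamma(K) \geq a$. I would first establish existence of a minimizer by taking a minimizing sequence $\{K_j\}$ and using that $\rho$, being even and not concentrated in any closed hemisphere, charges a set of directions that linearly span $\R^{n+1}$; this forces the $K_j$ to be uniformly bounded in Hausdorff metric (otherwise $\Phi(K_j) \to \infty$), so Blaschke selection yields a limit $K_\infty$. Lower semicontinuity of $\Phi$ and continuity of $\gamma$ under Hausdorff convergence give admissibility and minimality. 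The same non-concentration hypothesis rules out degeneracy of $K_\infty$ to a lower-dimensional set, since such a set has Gaussian measure zero, violating $\gamma(K_\infty) \geq a > 0$. A standard scaling comparison shows the constraint is active, i.e.\ $\gamma(K_\infty) = a$.

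Next I would derive the Euler–Lagrange equation by considering Wulff-type perturbations $K_t$ with support function $h_{K_\infty} + t\eta$ for continuous even $\eta$, and using the known Gaussian variational formula
\[
\frac{d}{dt}\bigg|_{t=0}\gamma(K_t) = \int_{\sph^n}\eta(\xi)\,dS^\gamma_{K_\infty}(\xi),
\]
together with $\tfrac{d}{dt}|_0\Phi(K_t)=\int\eta\,d\rho$. Lagrange multipliers give a constant $\lambda(a) > 0$ with $d\rho = \lambda(a)\, dS_{K_\infty}^\gamma$. To finish, I would vary $a$ and seek a fixed point of the equation $\lambda(a) = a^{\beta/(n+1)-1}$. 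Continuity of $a \mapsto \lambda(a)$ combined with a careful study of its limit behavior as $a \to 0^+$ and $a \to 1^-$ yields a solution via the intermediate value theorem; the hypothesis $\beta < 1/(n+1)$ is precisely what makes the exponent $\beta/(n+1)-1$ sufficiently negative for the two monotonicities to cross.

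The main obstacle, I expect, lies in this last matching step: controlling the asymptotics of $\lambda(a)$ at the endpoints and verifying the quantitative monotonicity required to apply the intermediate value theorem in the precise subcritical range $0 < \beta < 1/(n+1)$. A secondary technical difficulty is proving that the minimizer genuinely has nonempty interior — the Gaussian measure is not homogeneous, so the usual homogeneity-based arguments fail and one must replace them with estimates that exploit both the even structure of $\rho$ and the strict positivity of the Gaussian density near the origin.
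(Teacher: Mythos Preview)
The paper does not prove this theorem. Theorem~\ref{thm:Zhao_Gauss} is quoted from Huang, Xi, and Zhao \cite{Zhao_Gauss} purely as background in the Final Remarks section, in order to illustrate that the constant $c_u$ in Theorem~\ref{thm:main} need not be unique. No argument for it is given or even sketched here, so there is no ``paper's own proof'' against which to compare your proposal.

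That said, your outline follows the variational template that is standard for weighted Minkowski problems of this type (and is, in broad strokes, the strategy of the cited source): minimize $\int_{\sph^n} h_K\,d\rho$ under a Gaussian-volume constraint, use Blaschke selection and the non-concentration hypothesis to get a full-dimensional minimizer, read off $d\rho=\lambda\,dS^\gamma_K$ from the first variation, and then adjust the constraint level to force $\lambda=\gamma(K)^{\beta/(n+1)-1}$. Your own diagnosis of the delicate points is accurate: the endpoint asymptotics of $a\mapsto\lambda(a)$ and the role of the range $0<\beta<\tfrac{1}{n+1}$ are where the real work lies, and your sketch does not yet supply those estimates. But since the present paper offers nothing to benchmark against, any further critique would have to be made relative to \cite{Zhao_Gauss} rather than to this note.
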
\noindent
	Thus, for every admissible $\beta$ one can find a potentially different solution. In Theorem 1.4 of the same work, uniqueness is retrieved under further conditions. 
	
	To give a specific example in our setting, for $a>0$ consider in \eqref{MAequation} the particular $f(x)=\frac{a}{\sqrt{1+|x|^2}}$. Calculations show that this is equivalent to prescribing a constant measure in Theorem \ref{thm:Zhao_Gauss}. Then, as noted in \cite{Zhao_Gauss}, since the Gaussian surface area measure of a sphere $B_r \subset \R^{n+1}$ of radius $r>0$  is $dS^{\gamma}_{B_r}(\xi)=\frac{e^{-\frac{r^2}{2}}}{\sqrt{2\pi}^{n+1}}r^n$, the equation \[a=dS^{\gamma}_{B_r}(\xi)\] has two different solutions for $a$ sufficiently small. Denote the radii of these solutions as $r_1$ and $r_2$ respectively. Then, we have that $u_i(x)\coloneq r_i \sqrt{1+|x|^2}$ are both solutions in \eqref{MAequation} with $c_u=a^{-1}$.
	
	We conclude by noting that a further source of non-uniqueness is hidden in an inheritance vice of the proof. Indeed, when extending $(L)_{\sharp}\rho'$ to the whole sphere, we could have added another even measure concentrated in $\sph^n \cap v^{\perp}$, which would have given a different body as result. A possible fix for this issue would be prescribing an asymptotic cone, which is the usual practice for this kind of problem (see, for example, \cite{Bakelman}, or the more recent \cite{Schneider_cones}). We are not aware of how to implement this method in our setting but, if this issue were fixed and the body $K$ found in Theorem \ref{thm:main} was uniformly convex, under suitable regularity (granted by Theorem \ref{thm:regularity}) uniqueness would be achieved following \cite[Section 17.8]{Gilbarg_Trudinger}.
	

\bibliography{ref} 
\bibliographystyle{siam}

\bigskip

\parbox[t]{8.5cm}{
	Jacopo Ulivelli\\
	Dipartimento di Matematica\\
	Sapienza, University of Rome\\
	Piazzale Aldo Moro 5\\
	00185 Rome, Italy\\
	e-mail: jacopo.ulivelli@uniroma1.it}
\end{document}